\bmdefine{\be}{e}
\bmdefine{\bg}{g}
\bmdefine{\bk}{k}
\bmdefine{\bm}{m}
\bmdefine{\bp}{p}
\bmdefine{\bs}{s}
\bmdefine{\bt}{t}
\bmdefine{\bw}{w}
\newtheorem{thm}{THEOREM}[section]
\newtheorem{cor}{COROLLARY}
\newtheorem{prop}[thm]{PROPOSITION}
\newtheorem{rem}{REMARK}
\newtheorem{exa}{Example}[section]
\newcommand{\Romannum}[1]{\uppercase\expandafter{\romannumeral#1}}
\begin{document}
\title{Hensel's lemma for general continuous functions}
\author{Hajime Kaneko}
\address{Institute of Mathematics, University of Tsukuba, 1-1-1, Tennodai, Tsukuba, Ibaraki, 305-8571, JAPAN; 
Research Core for Mathematical Sciences, University of Tsukuba, 1-1-1, Tennodai, Tsukuba, Ibaraki, 305-8571, JAPAN}
\email{kanekoha@math.tsukuba.ac.jp}
\author{Thomas Stoll}
\address{1. Universit\'e de Lorraine, Institut Elie Cartan de Lorraine, UMR 7502, Vandoeuvre-l\`es-Nancy, F-54506, France;
2. CNRS, Institut Elie Cartan de Lorraine, UMR 7502, Vandoeuvre-l\`es-Nancy, F-54506, France}
\email{thomas.stoll@univ-lorraine.fr}
\begin{abstract}

In the present paper, we generalize the well-known Hensel's lifting lemma to any continuous function $f : \mathbb{Z}_p\rightarrow \mathbb{Z}_p$. 
This answers a question posed by Axelsson and Khrennikov (2016) who showed the validity of Hensel's lemma for $1$- and for $p^\alpha$-Lipschitz functions.
For the statement and the proof, we introduce a suitable generalization of the original van der Put series. We use the concept of approximability of continuous functions to give numerical examples.

\end{abstract}
\keywords{Hensel's lifting lemma; van der Put series; $p$-adic analysis}
\subjclass[2010]{11D88 (primary), and 11S85 (secondary)} \maketitle
\section{Introduction}
Denote by \(\mathbb{N}\) (resp. \(\mathbb{Z}^{+}\)) the set of nonnegative integers (resp. positive integers). Let \(p\) be a prime number and \(\mathbb{Z}_p\) the ring of \(p\)-adic integers. 
We denote \(v_p(u)\) (resp. \(|u|_p\)) the \(p\)-adic order (resp. the \(p\)-adic absolute value) of 
\(u\in\mathbb{Z}_p\) normalized with \(v_p(p)=1\) (resp. \(|p|_p=p^{-1}\)). 
We first recall Hensel's lifting lemma. Let \(Q(X)\in\mathbb{Z}_p[X]\backslash\mathbb{Z}_p\) 
and \(u\in\mathbb{Z}_p\). Assume that 
\[v_p(Q(u))>2v_p(Q'(u)).\]
Then there exists a unique \(\xi\in\mathbb{Z}_p\) satisfying 
\[v_p(\xi-u)>v_p(Q'(u))\]
and
\[Q(u)=0.\]
Axelsson and Khrennikov \cite{axe} generalized Hensel's lemma to \(1\)-Lipschitz functions, namely to functions
\(f:\mathbb{Z}_p\to\mathbb{Z}_p\) which satisfy
\[|f(x)-f(y)|_p\leq |x-y|_p\]
for any \(x,y\in\mathbb{Z}_p\) (from this, they derived also a generalization for \(p^\alpha\)-Lipschitz functions, $\alpha\geq 1$). For the statement and the proof of their generalization, they used van der Put series applied to this class of functions. Van der Put series date back to the work of M. van der Put \cite{vdP1, vdP2} and have shown to be very fruitful in $p$-adic analysis, dynamical systems and ergodic theory (see \cite{Sch}) as well as in various applications in cryptography, computer science and numerical analysis (see \cite{AKbook} for a detailed account on the various applications). For the application to dynamical systems it is natural to investigate the minimal conditions on the functions that allow such series expansions.\par

Axelsson and Khrennikov \cite[p.224, Remark 2.5]{axe} asked for a generalization of Hensel's lemma to any continuous function \(f:\mathbb{Z}_p\to\mathbb{Z}_p\) which would lead to an extension to their result. In this article, we provide such a generalization of Hensel's lemma to any continuous function, introducing the notion of generalized 
van der Put series and approximability of continuous functions. 
We consider the existence of the zeros of continuous functions and discuss uniqueness under certain conditions. \par
We first recall Theorem 2.4 of \cite{axe} (notation and statements are slightly modified for convenience to generalization; cf. \cite{ana}). 
For a positive integer \(m\), we denote its base-\(p\) expansion 
by 
\begin{align}
m=m_0+m_1p+\cdots+m_{k}p^k,
\label{eqn:1-1}
\end{align}
where \(k=k(m)=\lfloor \log_p m\rfloor\) and \(\log_p m=\log m/\log p\). 
In other words, \(k\) is a unique positive integer with 
\begin{align}
p^k\leq m<p^{1+k},
\label{eqn:1-2}
\end{align}
and \(m_0,m_1,\ldots,m_k\in\{0,1,\ldots,p-1\}\), \(m_k\ne 0\). For convenience, if \(m=0\), put 
\begin{align}
k=k(0)=\lfloor \log_p 0\rfloor:=0.
\label{eqn:1-3}
\end{align} 
Set 
\[
\chi(m,x)=
\begin{cases}
1 & \mbox{if }|x-m|_p\leq p^{-1-k},\\
0 & \mbox{ otherwise}
\end{cases}
\]
for any \(x\in\mathbb{Z}_p\). 
Let \(f:\mathbb{Z}_p\to\mathbb{Z}_p\) be any continuous function. 
Then there exists a unique sequence \(B(f;m)\) (\(m=0,1,\ldots\)) such that 
\begin{align}
f(x)=\sum_{m=0}^{\infty} B(f;m) \chi(m;x)
\label{eqn:1-4}
\end{align}
for any \(x\in\mathbb{Z}_p\), where (\ref{eqn:1-4}) is called the van der Put series of \(f\). 
Moreover, \(B(f;m)\) satisfies 
\[
B(f;m)=\begin{cases}
f(m) & \mbox{if }m< p, \\
f(m)-f(m-m_k p^k) & \mbox{otherwise}.
\end{cases}
\]
Anashin, Khrennikov and Yurova \cite[Theorem 5]{ana} showed that \(f\) is \(1\)-Lipschitz if and only if 
\begin{align}
v_p(B(f;m))\geq \lfloor \log_p m\rfloor
\label{eqn:1-5}
\end{align}
for any nonnegative integers \(m\). 
Thus, we see if \(f\) is \(1\)-Lipschitz, then 
\[b(f;m):= p^{-\lfloor \log_p m\rfloor} B(f;m)\in\mathbb{Z}_p.\]

We are now ready to state the main result of {\cite{axe}}. (The condition $m\geq R$, appearing in the second assumption in {\cite[Theorem 2.4]{axe}}, is a typo and has to be omitted.)
\begin{thm}[Theorem 2.4 in {\cite{axe}}]
Let \(f:\mathbb{Z}_p\to\mathbb{Z}_p\) be a \(1\)-Lipschitz function. Let \(R_0\) be a positive integer and 
\(u\) an integer with \(0\leq u< p^{R_0}\) and 
\[f(u)\equiv 0 \pmod {p^{R_0}}.\]
For any nonnegative integers \(m,R\) with \[m<p^R, \quad m\equiv u \pmod {p^{R_0}}, \quad R\geq R_0,\] assume that 
\begin{align*}
&\{b(f;m+i\cdot p^{R})\hspace{-2.5mm}\mod p\mid i=1,2,\ldots,p-1\}\\
&\hspace{35mm}=\{j \hspace{-2.5mm}\mod p\mid j=1,2,\ldots,p-1\}.
\end{align*}
Then there exists a unique \(\xi\in\mathbb{Z}_p\) satisfying \(f(\xi)=0\) and \(\xi\equiv u\pmod {p^{R_0}}\). 
\label{thm1}
\end{thm}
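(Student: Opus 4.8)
The plan is to construct the root $\xi$ one base-$p$ digit at a time, in the spirit of classical Hensel lifting, using the van der Put expansion to control how $f$ changes when a single high-order digit is altered. Concretely, I would build a sequence of integers $\xi_{R_0}=u,\xi_{R_0+1},\xi_{R_0+2},\ldots$ with $0\le \xi_R<p^R$, $\xi_R\equiv u\pmod{p^{R_0}}$, $\xi_{R+1}\equiv \xi_R\pmod{p^R}$, and $f(\xi_R)\equiv 0\pmod{p^R}$, and then set $\xi=\lim_{R\to\infty}\xi_R$. The base case $R=R_0$ is exactly the hypothesis $f(u)\equiv 0\pmod{p^{R_0}}$, and since $R_0\ge 1$ every integer $\xi_R+ip^R$ encountered below will satisfy $m\ge p$, so the ``otherwise'' branch of the van der Put coefficient formula applies throughout.

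For the inductive step I would write $\xi_{R+1}=\xi_R+ip^R$ with $i\in\{0,1,\ldots,p-1\}$ and compute $f(\xi_{R+1})$ from $f(\xi_R)$. The crucial observation is that for $1\le i\le p-1$ the integer $m:=\xi_R+ip^R$ has leading base-$p$ digit $i\ne 0$ at position $R$ (because $\xi_R<p^R$), so $\lfloor\log_p m\rfloor=R$ and the coefficient formula gives
\[
B(f;\xi_R+ip^R)=f(\xi_R+ip^R)-f(\xi_R)=p^R\,b(f;\xi_R+ip^R).
\]
Writing $f(\xi_R)=p^R c_R$ with $c_R\in\mathbb{Z}_p$ yields $f(\xi_R+ip^R)=p^R\bigl(c_R+b(f;\xi_R+ip^R)\bigr)$, so that $\xi_R+ip^R$ lifts the congruence to level $R+1$ precisely when $b(f;\xi_R+ip^R)\equiv -c_R\pmod p$. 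I then split on whether $c_R\equiv 0\pmod p$: if so, $i=0$ is the unique admissible digit; if not, $-c_R\bmod p$ lies in $\{1,\ldots,p-1\}$ and the bijectivity hypothesis (that $i\mapsto b(f;\xi_R+ip^R)\bmod p$ maps $\{1,\ldots,p-1\}$ onto $\{1,\ldots,p-1\}$) furnishes a unique admissible $i$. Either way the digit is uniquely determined, which simultaneously drives the existence construction and the uniqueness argument. The hypothesis is legitimately applicable at each stage because $\xi_R<p^R$, $\xi_R\equiv u\pmod{p^{R_0}}$, and $R\ge R_0$.

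Finally, since $\xi_{R+1}\equiv\xi_R\pmod{p^R}$ the sequence $(\xi_R)$ is Cauchy and converges to some $\xi\in\mathbb{Z}_p$ with $\xi\equiv u\pmod{p^{R_0}}$; continuity of $f$ together with $|f(\xi_R)|_p\le p^{-R}$ then gives $f(\xi)=0$. For uniqueness, given any root $\xi'$ with $\xi'\equiv u\pmod{p^{R_0}}$, I would show its truncations $\xi'_R$ to $R$ digits satisfy $f(\xi'_R)\equiv 0\pmod{p^R}$ using the $1$-Lipschitz bound $|f(\xi'_R)-f(\xi')|_p\le|\xi'_R-\xi'|_p\le p^{-R}$, and then invoke the step-wise uniqueness of the digit to conclude $\xi'_R=\xi_R$ for all $R$, hence $\xi'=\xi$. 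I expect the main obstacle to be the key identity $B(f;\xi_R+ip^R)=p^R\,b(f;\xi_R+ip^R)$: one must verify carefully that $\xi_R+ip^R$ genuinely has $R+1$ digits with leading digit $i$ so that both the van der Put formula and the $1$-Lipschitz normalization $b(f;m)=p^{-\lfloor\log_p m\rfloor}B(f;m)$ apply, and then correctly isolate the $i=0$ case in which the approximation already improves on its own.
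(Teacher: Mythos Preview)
Your argument is correct and is essentially the same Hensel-lifting construction the paper uses. The paper does not prove Theorem~\ref{thm1} directly (it is quoted from \cite{axe}); instead it proves the more general Theorem~\ref{thm2} and observes that the case $\Phi=Id$, $h=0$, $S(n)=\{1,\ldots,p-1\}$, $R_0=1+n_0$ recovers Theorem~\ref{thm1}. Under that specialization the paper's proof of Theorem~\ref{thm2} collapses exactly to your argument: one builds $u_{n_0}=u,u_{n_0+1},\ldots$ with $u_{j+1}=u_j+ip^{j+1}$ via the identity $f(u_j+ip^{1+j})=f(u_j)+p^{j+1}b(u_j+ip^{1+j})$ (your key step), chooses the unique admissible $i\in\{0\}\cup S(j)$, and passes to the limit. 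For uniqueness the paper expresses $f(\xi)-f(\xi(j))$ as a tail of the van der Put series and bounds it using $v_p(B(m))\ge\tau(m)$, which for $\Phi=Id$ is precisely the $1$-Lipschitz estimate $|f(\xi'_R)-f(\xi')|_p\le p^{-R}$ you invoke. The only cosmetic difference is that the paper keeps track of the extra parameters $\Phi$, $h$, and $S(n)$ throughout; in the situation of Theorem~\ref{thm1} these contribute nothing, and your streamlined version is exactly what remains.
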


The fact that \(b(f;m)\in\mathbb{Z}_p\), which does not generally hold for continuous functions, is essential for the proof of Theorem \ref{thm1}. Thus, it is necessary to  
generalize van der Put series.

The structure of the paper is as follows. In Section \ref{sec2} we define generalized van der Put series by introducing a strictly increasing integer-valued function $\Phi$. This function corresponds to the concept of the modulus of continuity in the case of real-valued continuous functions. We then use this generalization in Section \ref{sec3} to classify general continuous functions and to state and prove our main result (Theorem \ref{thm2}). In Section \ref{sec4} we generalize the notion of derivative and approximability to this context. Along our way, we illustrate our results by some concrete examples. 

\section{Generalized van der Put series}\label{sec2}
This section is devoted to our generalization of van der Put series. Let \(\Phi:\mathbb{N}\to\mathbb{N}\) be a strictly increasing function. In the case of $\Phi=Id$, the identity function on \(\mathbb{N}\), we recover the original van der Put series.
To begin with, for any given \(p\)-adic integer 
\begin{align}\label{aaa}
x=\sum_{i=0}^{\infty} x_i p^i
\end{align} with \(x_i\in \{0,1,\ldots,p-1\}\) we split its terms according to $\Phi$, namely,
\begin{align}\label{bbb}
x=\sum_{j=0}^{\infty} \sum_{i=\Phi(-1+j)+1}^{\Phi(j)} x_i p^i,
\end{align}
where we denote \(\Phi(-1):=-1\) for convenience. For any nonnegative integer \(m\) put 
\[
\tau(m)=\tau(\Phi;m):=\min\left\{h\in\mathbb{N}\ \left| \ m<p^{1+\Phi(h)}\right.\right\}
\]
and, for any \(x\in\mathbb{Z}_p\), 
\[
\chi(\Phi,m;x):=
\begin{cases}
1 & \mbox{if }|x-m|_p\leq p^{-1-\Phi(\tau(m))},\\
0 & \mbox{otherwise}.
\end{cases}
\]
Then we see 
\(\tau(Id;m)=k\) by (\ref{eqn:1-2}), (\ref{eqn:1-3}), and so \(\chi(Id,m;x)=\chi(m;x)\) for any \(x\in\mathbb{Z}_p\) 
by \(\Phi(\tau(m))=k\). 
Let \(m\) be a nonnegative integer with \(\tau(m)\geq 1\), namely, \(m\geq p^{1+\Phi(0)}\). 
Denote the base-\(p\) expansion of \(m\) by (\ref{eqn:1-1}). 
Set 
\[
M(m)=M(\Phi;m):=\sum_{i=\Phi(-1+\tau(m))+1}^{\Phi(\tau(m))}m_i p^i.
\]
Note that 
\(M(Id;m)=m_k p^k\) by \(\Phi(\tau(m))=\Phi(-1+\tau(m))+1=k\). 
\begin{prop}
For any continuous function \(f:\mathbb{Z}_p\to\mathbb{Z}_p\) there exists a unique sequence 
\(B(m)=B(\Phi,f;m)\) (\(m=0,1,\ldots\)) such that 
\begin{align}
f(x)=
\sum_{m=0}^{\infty}B(\Phi,f;m)\chi(\Phi,m;x)
\label{eqn:2-1}
\end{align}
for any \(x\in\mathbb{Z}_p\). Moreover, \(B(\Phi,f;m)\) satisfies 
\begin{align}
B(\Phi,f;m)=
\begin{cases}
f(m) & \mbox{if }m<p^{1+\Phi(0)},\\
f(m)-f(m-M(m)) & \mbox{otherwise}.
\end{cases}
\label{eqn:2-2}
\end{align}
\label{prop1}
\end{prop}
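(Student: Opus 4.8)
The plan is to prove existence together with the explicit formula (\ref{eqn:2-2}) by a telescoping argument on the partial sums, and then to establish uniqueness by a minimal-index argument. For $x=\sum_{i\ge 0}x_ip^i\in\mathbb{Z}_p$ and $N\in\mathbb{N}$ write $s_N=s_N(x):=\sum_{i=0}^{\Phi(N)}x_ip^i$ for the truncation of $x$ to its first $1+\Phi(N)$ digits, so that $s_N\to x$ as $N\to\infty$ because $\Phi(N)\to\infty$. I would \emph{define} $B(\Phi,f;m)$ by the right-hand side of (\ref{eqn:2-2}); the task is then to show that with this choice the series (\ref{eqn:2-1}) converges to $f(x)$, and that no other coefficient sequence works.

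The first step is the elementary but crucial observation that, for a fixed $x$, one has $\chi(\Phi,m;x)=1$ if and only if $m=\sum_{i=0}^{\Phi(\tau(m))}x_ip^i$: indeed $\chi=1$ forces $m\equiv x\pmod{p^{1+\Phi(\tau(m))}}$, and since $m<p^{1+\Phi(\tau(m))}$ this pins down $m$ as a truncation of $x$. Consequently the indices with $\chi(\Phi,m;x)=1$ and $\tau(m)\le N$ are exactly the $s_h$ with $0\le h\le N$ and $\tau(s_h)=h$, and since $\{m:\tau(m)\le N\}=\{m:m<p^{1+\Phi(N)}\}$ these are precisely the nonzero contributions to the $N$-th partial sum. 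I would then evaluate this partial sum by telescoping: for an admissible level $h\ge 1$ the formula (\ref{eqn:2-2}) gives $B(\Phi,f;s_h)=f(s_h)-f(s_h-M(s_h))=f(s_h)-f(s_{h-1})$, because deleting the top block $M(s_h)$ of $s_h$ leaves exactly $s_{h-1}$, while the level $h=0$ contributes $f(s_0)$.

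The main technical point, and the step I expect to be the most delicate, is the bookkeeping for those levels $h$ at which the corresponding digit block of $x$ vanishes: there $\tau(s_h)<h$, so $s_h$ does \emph{not} contribute at level $h$ (it has already been counted at a lower level), yet in precisely this case $s_h=s_{h-1}$, so the telescoping difference $f(s_h)-f(s_{h-1})$ equals zero. Inserting these vanishing terms allows me to extend the sum over all $h$ from $1$ to $N$ without changing its value, yielding
\[
\sum_{m<p^{1+\Phi(N)}}B(\Phi,f;m)\chi(\Phi,m;x)=f(s_0)+\sum_{h=1}^{N}\bigl(f(s_h)-f(s_{h-1})\bigr)=f(s_N).
\]
Once this identity is secured, existence is immediate: the nonzero terms tend to $0$ by continuity of $f$ (both $s_h$ and $s_{h-1}$ converge to $x$), so the series converges, and letting $N\to\infty$ gives $f(s_N)\to f(x)$, that is (\ref{eqn:2-1}). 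For uniqueness I would take any sequence $B'$ realizing (\ref{eqn:2-1}), set $C(m)=B'(m)-B(\Phi,f;m)$, and suppose for contradiction that $C\not\equiv 0$; choosing the least $m_0$ with $C(m_0)\ne 0$ and evaluating the relation $\sum_m C(m)\chi(\Phi,m;m_0)=0$ at the point $x=m_0$, every contributing index other than $m_0$ itself is a proper truncation of $m_0$ and hence strictly smaller, so all those terms vanish by minimality, leaving $C(m_0)=0$, a contradiction.
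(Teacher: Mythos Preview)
Your argument is correct and follows essentially the same approach as the paper: your truncations $s_N$ are exactly the paper's $x(N)$, and the existence proof via telescoping together with the bookkeeping for vanishing digit blocks mirrors the paper's use of the index set $J(x)$. For uniqueness the paper instead inducts on $\tau(m)$ to re-derive formula~(\ref{eqn:2-2}) directly, but your minimal-counterexample argument on the difference sequence rests on the same key fact (the indices contributing at $x=m_0$ are truncations of $m_0$, hence $\le m_0$) and is a cosmetic variant of the same idea.
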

We note that 
\(B(Id,f;m)=B(f;m)\) for any nonnegative integer \(m\). 
\begin{proof}[Proof of Proposition \ref{prop1}]
We first show that the sequence \(B(\Phi,f;m)\) (\(m=0,1,\ldots\)) defined inductively by (\ref{eqn:2-2}) 
satisfies (\ref{eqn:2-1}) (existence). For \(x\in\mathbb{Z}_p\) set
\begin{align}
F(x):=\sum_{m=0}^{\infty}B(\Phi,f;m)\chi(\Phi,m;x).
\label{eqn:2-3}
\end{align}
The convergence of the right-hand side of (\ref{eqn:2-3}) follows from 
the uniform continuity of \(f\) and 
\[\lim_{m\to\infty}|M(m)|_p=0\]
by the assumption that $\Phi$ is strictly increasing.
For any \(x\in\mathbb{Z}_p\) with (\ref{aaa}) and any nonnegative integer \(j\), 
set 
\begin{align}
x(j):=\sum_{i=0}^{\Phi(j)}x_i p^i.
\label{eqn:2-4}
\end{align}
Observe that 
\[F(x)=\sum_{j=0}^{\infty} \sum_{m\in\mathbb{N}, \tau(m)=j}B(\Phi,f;m)\chi(\Phi,m;x).\]
Let again \(j\) be a nonnegative integer. Then, for any \(m\in\mathbb{N}\) with \(\tau(m)=j\), we have that 
\begin{align}
\chi(\Phi,m;x)=1 \mbox{ if and only if } \tau(x(j))=j \mbox{ and }m=x(j).
\label{eqn:2-5}
\end{align}
It is easily seen that \(\tau(x(j))=j\) if and only if \(j=0\) or \(x(j)>x(j-1)\) because 
\[x(j)=x(j-1)+\sum_{i=\Phi(j-1)+1}^{\Phi(j)} x_i p^i\]
for \(j>0\). 
Letting 
\[J(x):=\{0\}\cup\{j\in\mathbb{Z}^{+}\mid x(j)>x(j-1)\},\]
we get by (\ref{eqn:2-2}) that, for any \(j\in J(x)\backslash \{0\}\), 
\[
B(\Phi,f;x(j))=f(x(j))-f(x(j-1))
\]
and that 
\begin{align*}
F(x)&=\sum_{j\in J(x)} B(\Phi,f;x(j))\\
&=f(x(0))+\sum_{j\in J(x)\backslash\{0\}} \big(f(x(j))-f(x(j-1))\big).
\end{align*}
Moreover, if \(j\in\mathbb{N}\backslash J(x)\), then we have \(f(x(j))-f(x(j-1))=0\). Thus, we see 
\[F(x)=f(x(0))+\sum_{j=1}^{\infty} \big(f(x(j))-f(x(j-1))\big)=f(x)\]
by \[\lim_{j\to\infty} x(j)=x.\]
Next, we prove that if  \(B(m)\) (\(m=0,1,\ldots\)) is a sequence satisfying 
(\ref{eqn:2-1}) for any \(x\in\mathbb{Z}_p\), then (\ref{eqn:2-2}) holds by 
induction on \(\tau(m)\) (uniqueness). We use the notation \(x(j) (j=0,1,\ldots)\) and \(J(x)\) in 
the case where \(x=m\). \par
First, we consider the case of \(\tau(m)=0\), namely, \(m<p^{1+\Phi(0)}\). 
Using (\ref{eqn:2-1}) and (\ref{eqn:2-5}), we get 
\[f(m)=B(m(0))=B(m)\]
by \(\tau(m(j))=0\) for any \(j\in\mathbb{N}\), 
which implies (\ref{eqn:2-2}). \par
We now assume that (\ref{eqn:2-2}) holds for any nonnegative integer \(m\) with \(\tau(m)\leq j\) for a fixed 
integer \(j\geq 0\). Let \(m\) be a nonnegative integer with \(\tau(m)=j+1\). Then we see 
\(J(m)\subset [0,j+1]\).  Using again (\ref{eqn:2-1}) and (\ref{eqn:2-5}), we get 
\begin{align}
f(m)=\sum_{l\in J(m)}B(m(l)).
\label{eqn:2-6}
\end{align}
Put 
\[J(m):=\{0=j_0<\cdots<j_{\kappa-1}<j_{\kappa}\},\]
where \(\kappa\geq 1\) and \(m(j_{\kappa})=m\) and \(m(j_{\kappa-1})=m-M(m)\). 
Hence, we obtain by (\ref{eqn:2-6}) and the inductive hypothesis that 
\begin{align*}
f(m)&=B(m(0))+\left(\sum_{l\in J(m) , 1\leq l\leq j_{\kappa-1}}B(m(l))\right)+B(m(j_{\kappa}))\\
&=f(m(0))+\left(\sum_{l\in J(m) , 1\leq l\leq j_{\kappa-1}}\big(f(m(l))-f(m(l-1))\big)\right)+B(m)\\
&=f(m(0))+\left(\sum_{1\leq l\leq j_{\kappa-1}}\big(f(m(l))-f(m(l-1))\big)\right)+B(m)\\
&=f(m(j_{\kappa-1}))+B(m)=f(m-M(m))+B(m),
\end{align*}
which implies (\ref{eqn:2-2}). Therefore, we proved the proposition. 
\end{proof}

\section{Hensel's lemma for continuous functions}\label{sec3}
In what follows, we use notation (\ref{aaa}) and (\ref{bbb}) for a \(p\)-adic integer \(x\). 
Let \(\Phi:\mathbb{N}\to\mathbb{N}\) be a strictly increasing function as used in the previous section. In order to state Hensel's lemma 
for continuous functions, we define a certain class of continuous functions based on the use of \(\Phi\). \par 
Let \(f:\mathbb{Z}_p\to\mathbb{Z}_p\) be a continuous function and \(n\) a positive integer. 
Since \(f\) is uniformly continuous on \(\mathbb{Z}_p\), there exists a nonnegative integer \(l\) such that, 
for any \(x,y\in\mathbb{Z}_p\), if \(|x-y|_p\leq p^{-l}\) then \(|f(x)-f(y)|_p\leq p^{-n}\). We denote the minimal value 
of such an \(l\) by \(\psi(f;n)\). Note that \(\psi(f;n)\) (\(n=1,2,\ldots\)) is non-decreasing. \par
Let \(\mathcal{F}(\Phi)\) be the class of continuous functions \(f:\mathbb{Z}_p\to\mathbb{Z}_p\) satisfying 
\[\psi(f;n)\leq 1+\Phi(n-1)\]
for any positive integer \(n\), namely, for any \(x,y\in\mathbb{Z}_p\), 
\begin{align}\label{xxx}
\mbox{if }
|x-y|_p\leq p^{-1-\Phi(n-1)}\mbox{, then }|f(x)-f(y)|_p\leq p^{-n}.
\end{align} 
Note that $|x-y|_p\leq p^{-1-\Phi(n-1)}$ is equivalent to 
\[\sum_{i=0}^{\Phi(n-1)}x_i p^i=\sum_{i=0}^{\Phi(n-1)}y_i p^i.\]
We see for any continuous \(f\) that 
\(f\in \mathcal{F}(Id)\) if and only if \(f\) is \(1\)-Lipschitz. \par
For any continuous function \(f\), we see that 
there exists a strictly increasing function \(\Phi\) such that \(f\in \mathcal{F}(\Phi)\), 
defining \(\Phi\) as follows: 
\begin{align*}
\Phi(0)&:=\max\{0,-1+\psi(f;1)\},\\
\Phi(n)&:=\max\{1+\Phi(n-1),-1+\psi(f;n+1)\}
\end{align*}
for any positive integer \(n\). \par
We now generalize (\ref{eqn:1-5}) as follows: 
\begin{prop}
Let \(\Phi:\mathbb{N}\to\mathbb{N}\) be a strictly increasing function and 
\(f:\mathbb{Z}_p\to\mathbb{Z}_p\) a continuous function. \\
Then \(f\in \mathcal{F}(\Phi)\) if and only if, for any nonnegative integer \(m\), 
\begin{align}
v_p(B(\Phi,f;m))\geq \tau(\Phi;m).
\label{eqn:3-1}
\end{align}
\label{prop2}
\end{prop}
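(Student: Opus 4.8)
The plan is to prove both implications directly from the explicit coefficient formula (\ref{eqn:2-2}) together with the van der Put expansion established in Proposition \ref{prop1}. Two elementary observations drive everything. \textbf{Observation (i):} the base-$p$ support of $M(m)$ lies in the positions $\Phi(\tau(m)-1)+1,\dots,\Phi(\tau(m))$, so $v_p(M(m))\geq 1+\Phi(\tau(m)-1)$; consequently $m$ and $m-M(m)$ share all digits up to position $\Phi(\tau(m)-1)$. \textbf{Observation (ii):} if a truncation satisfies $x(j)>x(j-1)$, then $x(j)$ acquires a nonzero digit in positions $\Phi(j-1)+1,\dots,\Phi(j)$, whence $p^{1+\Phi(j-1)}\leq x(j)<p^{1+\Phi(j)}$ and therefore $\tau(\Phi;x(j))=j$.

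For the forward implication I assume $f\in\mathcal{F}(\Phi)$. When $m<p^{1+\Phi(0)}$ one has $\tau(m)=0$ and $B(\Phi,f;m)=f(m)\in\mathbb{Z}_p$, so (\ref{eqn:3-1}) is automatic. When $\tau(m)=h\geq 1$, Observation (i) gives $|M(m)|_p\leq p^{-1-\Phi(h-1)}$, i.e. $|m-(m-M(m))|_p\leq p^{-1-\Phi(h-1)}$. Applying the defining property (\ref{xxx}) with $n=h$ yields $|f(m)-f(m-M(m))|_p\leq p^{-h}$, which is precisely $v_p(B(\Phi,f;m))\geq h=\tau(m)$.

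For the converse I assume (\ref{eqn:3-1}) for every $m$ and fix $x,y\in\mathbb{Z}_p$ with $|x-y|_p\leq p^{-1-\Phi(n-1)}$; the goal is $|f(x)-f(y)|_p\leq p^{-n}$. The hypothesis means $x$ and $y$ agree in all digits up to position $\Phi(n-1)$, hence $x(j)=y(j)$ for every $j\leq n-1$, and in particular $f(x(n-1))=f(y(n-1))$. Telescoping the expansion from the proof of Proposition \ref{prop1} as far as index $n-1$ gives $f(x)-f(x(n-1))=\sum_{j\geq n,\, j\in J(x)}B(\Phi,f;x(j))$, since the terms with $j\notin J(x)$ vanish. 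By Observation (ii) each contributing index has $\tau(x(j))=j\geq n$, so (\ref{eqn:3-1}) forces $|B(\Phi,f;x(j))|_p\leq p^{-j}\leq p^{-n}$, and the ultrametric inequality yields $|f(x)-f(x(n-1))|_p\leq p^{-n}$; the same bound holds for $y$. Combining these two estimates with the equality $f(x(n-1))=f(y(n-1))$ through the ultrametric inequality gives $|f(x)-f(y)|_p\leq p^{-n}$, as required.

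The digit bookkeeping behind Observations (i) and (ii) is routine. The step I expect to require the most care is the converse, specifically the verification via Observation (ii) that every index feeding into $f(x)-f(x(n-1))$ carries $\tau$-value at least $n$: this is exactly the mechanism that upgrades the pointwise coefficient bound (\ref{eqn:3-1}) into the uniform modulus of continuity defining $\mathcal{F}(\Phi)$, and it hinges on the fact that a genuine increase $x(j)>x(j-1)$ pins down $\tau(x(j))$ to equal $j$ rather than something smaller.
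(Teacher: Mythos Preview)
Your proof is correct and follows essentially the same approach as the paper. The forward direction is identical; for the converse the paper subtracts the two van der Put series directly (observing $\chi(\Phi,m;x)=\chi(\Phi,m;y)$ whenever $m<p^{1+\Phi(n-1)}$, so only terms with $\tau(m)\geq n$ survive) rather than routing through the common truncation $x(n-1)=y(n-1)$, but the underlying mechanism is the same.
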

\begin{rem}
\begin{rm}
In what follows, we consider the case where \(\Phi\) and \(f\) are fixed. 
For simplification, we also use the notation \(B(m)\), \(\tau(m)\), \(\chi(m;x)\), \(M(m)\) instead of 
\(B(\Phi,f;m)\), \(\tau(\Phi;m)\), \(\chi(\Phi,m;x)\), \(M(\Phi;m)\).
\end{rm}
\end{rem}
\begin{proof}[Proof of Proposition \ref{prop2}]
First, assume that \(f\in\mathcal{F}(\Phi)\). Let \(m\) be a nonnegative integer. 
For the proof of (\ref{eqn:3-1}), we may assume that \(\tau(m)\geq 1\). Observe by the 
definition of \(M(m)\) that 
\[v_p\big(m-(m-M(m))\big)=v_p(M(m))\geq 1+\Phi(-1+\tau(m)).\]
Applying (\ref{xxx}) with \(x=m, y=m-M(m)\) and \(n=\tau(m)\), we see 
\[v_p(B(m))\geq \tau(m)\]
by (\ref{eqn:2-2}), which implies (\ref{eqn:3-1}). \par
Conversely, we assume (\ref{eqn:3-1}). Let \(n\) be a positive integer and let \(x,y\in\mathbb{Z}_p\) with 
\(v_p(x-y)\geq 1+\Phi(n-1)\), namely, 
\[\sum_{i=0}^{\Phi(n-1)}x_i p^i=\sum_{i=0}^{\Phi(n-1)}y_i p^i.\]
Observe for any integer \(m\) with \(0\leq m <p^{1+\Phi(n-1)}\) that 
\[\chi(m;x)=\chi(m;y)\]
because \(\chi(m;x)\) (resp. \(\chi(m;y)\)) depends only on the first \(\Phi(n-1)\)-th digits of 
\(x\) (resp. \(y\)) by \(\tau(m)\leq n-1\). Thus, Proposition \ref{prop1} implies that 
\[f(x)-f(y)=
\sum_{m\geq p^{1+\Phi(n-1)}} B(m)\big(\chi(m;x)-\chi(m;y)\big).\]
For any integer \(m\) with \(m\geq p^{1+\Phi(n-1)}\), we get by (\ref{eqn:3-1}) that 
\[v_p(B(m))\geq \tau(m)\geq n,\]
and that 
\[v_p(f(x)-f(y))\geq n,\]
which implies that \(f\in \mathcal{F}(\Phi)\) because \(n\) is an arbitrary positive integer. 
\end{proof}
In what follows, for any \(f\in\mathcal{F}(\Phi)\) and \(m\in\mathbb{N}\), put 
\begin{align}
b(m)=b(\Phi,f;m):=p^{-\tau(\Phi;m)}B(\Phi,f;m)\in\mathbb{Z}_p
\label{eqn:3-2}
\end{align}
by Proposition \ref{prop2}. Moreover, for any \(p\)-adic integer \(x\) 
and nonnegative integer \(j\), put 
\[\rho(x;j):=p^{-1-\Phi(j-1)}\sum_{i=1+\Phi(j-1)}^{\Phi(j)}x_i p^i.\]
We now introduce a generalization of Theorem \ref{thm1} for continuous functions. 
\begin{thm}
Let \(\Phi:\mathbb{N}\to\mathbb{N}\) be a strictly increasing function and let \(f\in \mathcal{F}(\Phi)\). 
Let \(h,n_0\) and \(u\) be nonnegative integers with \(u<p^{1+\Phi(n_0)}\) and 
\begin{align}
  f(u)\equiv 0 \pmod{p^{1+h+n_0}}.\label{umod}
\end{align}
Suppose that there exists a sequence \((S(n))_{n\geq n_0}\) of sets of nonnegative integers with 
\[S(n)\subset \left(0,p^{\Phi(n+1)-\Phi(n)}\right), \quad \mbox{Card }S(n)=p-1,\]
where Card denotes the cardinality,
satisfying the following: For any nonnegative integers \(n,m\) with 
\[n\geq n_0, \quad m<p^{1+\Phi(n)}, \quad m\equiv u \pmod {p^{1+\Phi(n_0)}},\]
we have 
\begin{align}
&\left\{\left.b\left(\Phi,f;m+ip^{1+\Phi(n)}\right)\hspace{-2.5mm}\mod p^{h+1} \ \right| \  
i\in S(n)\right\}\nonumber
\\
&\hspace{40mm}
=\left\{\left. i p^h \mod p^{h+1}\right| i=1,2,\ldots,p-1\right\}.
\label{eqn:3-3}
\end{align}
Then there exists \(\xi\in\mathbb{Z}_p\) such that 
\begin{align}
&f(\xi)=0,\label{eqn:3-4}\\
&\xi\equiv u\pmod {p^{1+\Phi(n_0)}},\label{eqn:3-5}
\end{align}
and such that, for any \(n\geq n_0\), 
\begin{align}
\rho(\xi;n+1)\in \{0\}\cup S(n).\label{eqn:3-6}
\end{align}
Assume further that, for any integer \(m\) with \(m\geq p^{1+\Phi(n_0)}\) and 
\(m\equiv u \pmod {p^{1+\Phi(n_0)}}\), we have 
\begin{align}\label{yyy}
b(\Phi,f;m)\equiv 0 {\pmod {p^h}}.
\end{align}
Then \(\xi\in\mathbb{Z}_p\) satisfying (\ref{eqn:3-4}), (\ref{eqn:3-5}) and (\ref{eqn:3-6}) 
is unique. 
\label{thm2}
\end{thm}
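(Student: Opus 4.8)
The plan is to construct $\xi$ by a Hensel-type lifting that fixes one $\Phi$-block of $p$-adic digits at a time, reading off both existence and uniqueness from the generalized van der Put expansion of Proposition \ref{prop1} together with condition (\ref{eqn:3-3}). Concretely, I would build a sequence of truncations $\xi_n$ ($n\geq n_0$) with $\xi_n<p^{1+\Phi(n)}$, $\xi_n\equiv u\pmod{p^{1+\Phi(n_0)}}$ and
\[
f(\xi_n)\equiv 0 \pmod{p^{1+h+n}},
\]
starting from $\xi_{n_0}:=u$, which is admissible by (\ref{umod}). At each step I pass from $\xi_n$ to $\xi_{n+1}=\xi_n+\rho\, p^{1+\Phi(n)}$ with $\rho\in\{0\}\cup S(n)$, so that $\rho=\rho(\xi_{n+1};n+1)$ is exactly the new block. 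Since $\rho<p^{\Phi(n+1)-\Phi(n)}$, one checks $\tau(\xi_{n+1})=n+1$ and $M(\xi_{n+1})=\rho\, p^{1+\Phi(n)}$ whenever $\rho\neq 0$, so (\ref{eqn:2-2}) yields the fundamental recursion
\[
f(\xi_{n+1})=f(\xi_n)+B(\xi_{n+1})=f(\xi_n)+p^{n+1}b(\xi_{n+1}),
\]
while $f(\xi_{n+1})=f(\xi_n)$ when $\rho=0$. The limit $\xi:=\lim_n\xi_n$ exists because $|\xi_{n+1}-\xi_n|_p\to 0$, it satisfies (\ref{eqn:3-5}) and (\ref{eqn:3-6}) by construction, and $f(\xi)=0$ follows from continuity together with $|f(\xi_n)|_p\leq p^{-(1+h+n)}\to 0$.

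For the \emph{existence} part I only need to solve the one-step congruence $f(\xi_{n+1})\equiv 0\pmod{p^{2+h+n}}$. Writing $f(\xi_n)=p^{1+h+n}A$, the recursion shows this is equivalent to $p^hA+b(\xi_{n+1})\equiv 0\pmod{p^{h+1}}$. If $A\equiv 0\pmod p$ the choice $\rho=0$ works; otherwise, setting $a:=A\bmod p\in\{1,\dots,p-1\}$, I must realize $b(\xi_{n+1})\equiv -a\,p^h\pmod{p^{h+1}}$. Since $-a\equiv p-a\pmod p$ with $p-a\in\{1,\dots,p-1\}$, condition (\ref{eqn:3-3}) applied with $m=\xi_n$ (legitimate since $\xi_n<p^{1+\Phi(n)}$ and $\xi_n\equiv u\pmod{p^{1+\Phi(n_0)}}$) provides some $i\in S(n)$ with $b(\xi_n+i\,p^{1+\Phi(n)})\equiv (p-a)p^h\pmod{p^{h+1}}$, and $\rho:=i$ does the job. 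This closes the induction and produces $\xi$ as in (\ref{eqn:3-4})--(\ref{eqn:3-6}).

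For \emph{uniqueness} I would suppose $\xi$ satisfies (\ref{eqn:3-4})--(\ref{eqn:3-6}), let $\xi_n$ be its truncations (so again $\xi_{n_0}=u$), and show each block $\rho(\xi;n+1)$ is forced. The decisive sharpening is that, under the extra hypothesis (\ref{yyy}), every nonzero tail block contributes $v_p\big(f(\xi_{j+1})-f(\xi_j)\big)=v_p\big(p^{j+1}b(\xi_{j+1})\big)\geq j+1+h$, because (\ref{yyy}) forces $b(\xi_{j+1})\equiv 0\pmod{p^h}$; telescoping over $j\geq n+1$ and using $f(\xi)=0$ then upgrades the naive estimate to
\[
v_p\big(f(\xi_{n+1})\big)\geq n+2+h,\qquad\text{i.e.}\qquad f(\xi_{n+1})\equiv 0\pmod{p^{2+h+n}}.
\]
With this congruence in hand the block is pinned down exactly as in the existence step: if $f(\xi_n)\equiv 0\pmod{p^{2+h+n}}$ then $\rho=0$ is the only admissible choice, since any $i\in S(n)$ gives $b(\xi_{n+1})\not\equiv 0\pmod{p^{h+1}}$ by (\ref{eqn:3-3}) and hence $v_p(f(\xi_{n+1}))=n+1+h$; and if $f(\xi_n)\not\equiv 0\pmod{p^{2+h+n}}$ then $\rho=0$ is impossible, while the injectivity of $i\mapsto b(\xi_n+i\,p^{1+\Phi(n)})\bmod p^{h+1}$ on $S(n)$, guaranteed by the bijection in (\ref{eqn:3-3}), singles out a unique $i\in S(n)$. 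Induction on $n$ then forces all blocks of $\xi$ to coincide, proving uniqueness.

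The step I expect to be the main obstacle is precisely the valuation upgrade in the uniqueness argument: continuity and membership in $\mathcal{F}(\Phi)$ alone only give $v_p(f(\xi)-f(\xi_{n+1}))\geq n+2$, which controls the block merely modulo $p$ and is useless once $h\geq 1$ (as (\ref{yyy}) then renders the mod-$p$ relation vacuous). Extracting the full factor $p^h$ — so that the block becomes determined modulo $p^{h+1}$ — is exactly where hypothesis (\ref{yyy}) must be invoked, and making the telescoping estimate rigorous (convergence of the tail series and the ultrametric bound on its valuation) is the technical heart of the proof. The remaining verifications, namely $\tau(\xi_{n+1})=n+1$, $M(\xi_{n+1})=\rho\,p^{1+\Phi(n)}$, and the bookkeeping that $\rho(\xi_{n+1};n+1)=\rho$, are routine digit computations.
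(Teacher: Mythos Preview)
Your proposal is correct and follows essentially the same approach as the paper: both construct the sequence of partial truncations $\xi_n$ (the paper calls them $u_j$) via the recursion $f(\xi_n+i\,p^{1+\Phi(n)})=f(\xi_n)+p^{n+1}b(\xi_n+i\,p^{1+\Phi(n)})$ coming from (\ref{eqn:2-2}), and both reduce uniqueness to the key estimate $v_p(f(\xi_n))\geq 1+h+n$ for the truncations of any admissible $\xi$. The only cosmetic difference is that the paper derives this estimate from the full van der Put expansion $f(\xi)-f(\xi(j))=\sum_{m\geq p^{1+\Phi(j)}}b(m)p^{\tau(m)}\big(\chi(m;\xi)-\chi(m;\xi(j))\big)$ and observes that the surviving terms are exactly those with $m=\xi(j')$ for $j'>j$, whereas you write this out directly as the telescoping sum $\sum_{j'\geq j}\big(f(\xi_{j'+1})-f(\xi_{j'})\big)$; these are the same computation, and (\ref{yyy}) is invoked identically in both.
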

\begin{rem}
\begin{rm}
As we will see in Section \ref{sec4}, if $f$ is differentiable modulo $p^s$ at $u$, then $h$ corresponds to the $p$-adic order of the derivative of $f$. If $\Phi=Id$ and $h=0$ then $1+n_0=1+\Phi(n_0)$ which is the value of $R_0$ in Theorem \ref{thm1}. Thus, Theorem \ref{thm2} with \(\Phi=Id\) and \(h=0\) implies Theorem \ref{thm1}.
\end{rm}
\end{rem}
\begin{rem}
\begin{rm}
Note that (\ref{yyy}) always holds if \(h=0\). 
\end{rm}
\end{rem}
\begin{rem}
\begin{rm}
A sequence \((S(n))_{n\geq n_0}\) satisfying the assumptions of Theorem \ref{thm2} is not generally unique. 
In Example \ref{exa1}, we consider the case where (\ref{yyy}) holds. We give two solutions \(\xi\) satisfying 
(\ref{eqn:3-4}) and (\ref{eqn:3-5}), by changing \((S(n))_{n\geq n_0}\). It would be an interesting question to try to prove the uniqueness of $\xi$ without assuming (3.9).
\end{rm}
\end{rem}
\begin{proof}[Proof of Theorem \ref{thm2}]
First, we prove the existence of such $\xi$ by using Newton's method. We show by induction on \(j\) that there exists a unique 
sequence \((u_j)_{j\geq n_0}\) of integers with 
\begin{align}
u_{n_0}=u\in [0,p^{1+\Phi(n_0)})\cap \mathbb{Z}\label{eqn:3-7}
\end{align}
satisfying the following: 
\begin{enumerate}
\item[(1)] For all $j\geq n_0$, 
\begin{align}
f(u_j)\equiv 0{\pmod {p^{1+h+j}}}.
\label{eqn:3-8}
\end{align}
\item[(2)] If \(j\geq 1+n_0\), then there exists \(i\in\{0\}\cup S(j-1)\) such that 
\begin{align}
u_j=u_{j-1}+i p^{1+\Phi(j-1)}, \quad 0\leq u_j<p^{1+\Phi(j)}.
\label{eqn:3-9}
\end{align}
\end{enumerate}
The case of \(j=n_0\) follows from (\ref{umod}). 
We assume that there exist unique integers \(u_{n_0},u_{1+n_0},\ldots,u_l\) for \(l\geq n_0\) satisfying 
(\ref{eqn:3-7}), (\ref{eqn:3-8}) and (\ref{eqn:3-9}) for \(j=n_0,1+n_0,\ldots,l\). Let \(i\in S(l)\). 
We see by \(p^{1+\Phi(l)}\leq i p^{1+\Phi(l)}\leq -p^{1+\Phi(l)}+p^{1+\Phi(l+1)}\) that 
\[\tau\left(u_l+i p^{1+\Phi(l)}\right)=l+1\]
and that 
\[M\left(u_l+i p^{1+\Phi(l)}\right)=i p^{1+\Phi(l)}.\]
Thus, using (\ref{eqn:3-2}) and (\ref{eqn:2-2}) with \(m=u_l+i p^{1+\Phi(l)}\), we get 
\begin{align}
f\left(
u_l+i p^{1+\Phi(l)}
\right)
=f(u_l)+p^{l+1}b\left(
u_l+i p^{1+\Phi(l)}
\right).
\label{eqn:3-10}
\end{align}
Since \(u_l \equiv u_{n_0}=u {\pmod {p^{1+\Phi(n_0)}}}\) by (\ref{eqn:3-9}) with \(j=n_0,1+n_0,\ldots,l\), 
we can apply (\ref{eqn:3-3}) with \(n=l,m=u_l\). 
Since \(f(u_l)\equiv 0\mod p^{l+1+h}\), selecting suitable \(i\in \{0\}\cup S(l)\), we obtain that 
\[u_{l+1}:=u_l+i p^{1+\Phi(l)}(<p^{1+\Phi(l+1)})\]
is a unique integer satisfying (\ref{eqn:3-8}) and (\ref{eqn:3-9}) with \(j=l+1\).
Putting \(\xi:=\lim_{j\to\infty}u_j\) we showed the existence of $\xi$. 
In fact, if \(n\geq n_0\), then 
\begin{align*}
\sum_{j=1+\Phi(n)}^{\Phi(1+n)}\xi_j p^j=u_{n+1}-u_n=i p^{1+\Phi(n)}. 
\end{align*}
Thus, (\ref{eqn:3-6}) holds with \(\rho(\xi;n+1)=i\). \par
We now prove the uniqueness of $\xi$.  
We see that the sequence \((\xi(j))_{j\geq n_0}\), 
defined by (\ref{eqn:2-4}) for \(x=\xi\), satisfies (\ref{eqn:3-9}) by (\ref{eqn:3-6}) :
$$\xi(j)=\xi(j-1)+\rho(\xi; j) p^{1+\Phi(j-1)},\qquad j\geq 1+n_0,$$
where $\rho(\xi; j)\in\{0\}\cup S(j-1)$.
 Moreover, (\ref{eqn:3-7}) holds by 
\(0\leq \xi(n_0)<p^{1+\Phi(n_0)}\) and (\ref{eqn:3-5}). 
In the same way as the proof of Proposition \ref{prop2}, we can show for any \(j\geq n_0\) that 
\begin{align*}
&f(\xi)-f(\xi(j))\\
&=\sum_{m\geq p^{1+\Phi(j)}} B(m)
\Big(\chi(m;\xi)-\chi(m;\xi(j))\Big)\\
&=\sum_{m\geq p^{1+\Phi(j)}} b(m)p^{\tau(m)}
\Big(\chi(m;\xi)-\chi(m;\xi(j))\Big).
\end{align*}
We fix an integer \(m\) with \(m\geq p^{1+\Phi(j)}(\geq p^{1+\Phi(n_0)})\). Then we have 
\(\chi(m;\xi(j))=0\) by \(\tau(m)\geq j+1>j\geq \tau(\xi(j))\). 
Moreover, \(\chi(m;\xi)=1\) if and only if \(m=\xi(j')\) for some \(j'\geq j+1\). 
In particular, if \(\chi(m;\xi)=1\), then we have \(m\equiv u \mod p^{1+\Phi(n_0)}\). \par
For any integer \(m\) with \(m\geq p^{1+\Phi(j)}\) and \(m\equiv u \mod p^{1+\Phi(n_0)}\), we see by (\ref{yyy}) that 
\[v_p(b(m)p^{\tau(m)})\geq h+\tau(m)\geq 1+h+j.\]
Thus, we get by (\ref{eqn:3-4}) that 
\[f(\xi(j))\equiv f(\xi)=0 \pmod {p^{1+h+j}},\]
which implies (\ref{eqn:3-8}). By the uniqueness of \((u_j)_{j\geq n_0}\), we see 
\(\xi(j)=u_j\) for all \(j\geq n_0\). This concludes the proof of the theorem.
\end{proof}
\begin{exa}\label{exa1}
\begin{rm}
Let \(a=\sum_{i=0}^{\infty} a_i p^i \in\mathbb{Z}_p\) be fixed and \(f:\mathbb{Z}_p\to\mathbb{Z}_p\) be defined by 
\[f(x)=a+\sum_{j=0}^{\infty} x_{2j} p^j,\]
where $x=\sum_{i=0}^{\infty} x_i p^i$.
Let \(\Phi(n):=2n+1\) for any nonnegative integer \(n\). Then we have \(f\in\mathcal{F}(\Phi)\). 
In fact, if \(x,y \in\mathbb{Z}_p\) satisfy 
\(|x-y|_p\leq p^{-1-\Phi(n-1)}=p^{-2n}\) for some positive integer \(n\), then 
\(|f(x)-f(y)|_p\leq p^{-n}\). \par
In what follows, we check that Theorem \ref{thm2} is applicable to \(f\). Putting \(h=0,n_0=0,\) and \(u=-a_0\), we 
see 
\[u<p^{1+\Phi(n_0)}=p^2, \qquad f(u)\equiv 0 {\pmod {p^{1+h+n_0}}}.\]
Moreover, let \(m,n, i\) be nonnegative integers with \(0\leq m<p^{1+\Phi(n)}=p^{2n+2}\) and 
\(1\leq i\leq p-1\). Then we have 
\[f\left(m+i\cdot p^{1+\Phi(n)}\right)-f(m)=i\cdot p^{n+1}.\]
Observe that \(\tau(m+i\cdot p^{1+\Phi(n)})=n+1\) by 
\(p^{1+\Phi(n)}\leq m+i\cdot p^{1+\Phi(n)}<p^{1+\Phi(n+1)}\). Thus the number 
\[b\left(m+i\cdot p^{1+\Phi(n)}\right)(\equiv i \pmod p)\]
satisfies (\ref{eqn:3-3}) with \(S(n)=\{1,2,\ldots,p-1\}\) for any \(n\geq n_0=0\). \par
Similarly, (\ref{eqn:3-3}) with \(S(n)=\{p+1,p+2,\ldots,2p-1\}\) holds for any \(n\geq n_0=0\). 
In fact, we get for any \(1\leq i\leq p-1\) that 
\[f\left(m+(i+p)\cdot p^{1+\Phi(n)}\right)-f(m)=i\cdot p^{n+1}\]
and that \(\tau(m+(i+p)\cdot p^{1+\Phi(n)})=n+1\).
\end{rm}
\end{exa}
\begin{exa}
\begin{rm}
Let \(p=5\) and \(a\in\mathbb{Z}_5\) be fixed, and let \(f:\mathbb{Z}_5\to\mathbb{Z}_5\) be defined by 
\[f(x)=a+\sum_{j=0}^{\infty} x_{2j}^3 5^j.\]
In the same way as in Example \ref{exa1}, 
we have \(f\in\mathcal{F}(\Phi)\), where \(\Phi(n):=2n+1\) for any nonnegative integer \(n\). \par 
Observe that if \(m,n,i\) are nonnegative integers with \(0\leq m<5^{2n+2}\) and \(1\leq i\leq 4\), then we have 
\[f\left(m+i\cdot 5^{1+\Phi(n)}\right)-f(m)=i^3\cdot 5^{n+1}.\]
Moreover, the map \(\phi:\mathbb{Z}/5\mathbb{Z}\to \mathbb{Z}/5\mathbb{Z}\) with 
\[\phi(x \hspace{-1.5mm} \mod 5\mathbb{Z})=x^3 \mod {5\mathbb{Z}}\]
is bijective. 
Thus, in the same way as in Example \ref{exa1}, we can verify that Theorem \ref{thm2} is applicable to \(f\), where 
\(h=0,n_0=0\), \(u\) is a unique integer with 
\(0\leq u\leq 4\) and \(a+u^3 \equiv 0 \pmod 5\), and 
\(S(n)=\{1,2,3,4\}\) for any \(n\geq n_0=0\). 
\end{rm}
\end{exa}
We give an example of Theorem \ref{thm2} satisfying \(h\ne 0\) in Section 4.

\section{Approximability of continuous functions}\label{sec4}
In this section we use notation defined in Sections 2 and 3. Let \(f:\mathbb{Z}_p\to\mathbb{Z}_p\) be a function 
and \(u\in\mathbb{Z}_p\). Recall for a positive integer \(s\) that \(f\) is differentiable modulo \(p^s\) 
at \(u\) if there exists a \(p\)-adic number \(\partial_s f(u)\) satisfying the following: 
If \(n\) is a sufficiently large integer, then for any \(u'\in\mathbb{Z}_p\), we have 
\begin{align}
\label{modmod}
f(u+p^n u')\equiv f(u)+p^n u' \partial_s f(u) \pmod {p^{n+s}}.
\end{align}
Axelsson and Khrennikov showed for 1-Lipschitz functions \(f\) that if 
\(|\partial_s f(u)|_p=1\), then the Hensel's lifting process can be used (Theorem 2.4 and Corollary 2.6 in \cite{axe}). \par
For the application of Theorem \ref{thm2}, we now modify the notion of differentiability modulo \(p^s\) to consider the case where \(f\) is a general continuous 
function, using a strictly increasing function \(\Phi:\mathbb{N}\to\mathbb{N}\). Let \(u\in\mathbb{Z}_p\). 
We call \(f\) approximable at \(u\) with respect to \(\Phi\) if there exist nonnegative integers \(h=h(u)\), \(l=l(u),\) 
and a sequence \((\delta_n f) (u)\) (\(n\geq l\)) of \(p\)-adic numbers satisfying the following: 
For any integer \(n\geq l\) and \(u'\in\mathbb{Z}_p\), we have 
\begin{align}
f\left(u+p^{1+\Phi(n-1)}u'\right)\equiv f(u)+p^{h+n} u'\cdot (\delta_n f) (u) \pmod {p^{h+n+1}}.
\label{eqn:4-1}
\end{align}
The approximability of continuous functions generalizes the differentiability modulo \(p^s\). 
Thus, Corollary \ref{cor} is also applicable to differentiable function modulo \(p^s\). 
In fact, let $\Phi=Id$ and let \(f\) be a continuous function which is differentiable modulo \(p^s\) at \(u\). 
Let \(h=s-1\). 
Then (\ref{modmod}) implies that $$f(u+p^n u')=f\left(u+p^{1+\Phi(n-1)}u'\right)\equiv 
f(u)+p^{h+n} u'\cdot (\delta_n f) (u) \pmod {p^{h+n+1}},$$
where 
$(\delta_n f) (u)=\partial_{h+1} f (u)/p^h$. \par
In order to apply Theorem \ref{thm2} we will make use of the fact that $(\delta_n f) (u)$ is a unit in $\mathbb{Z}_p$. 
Let \(\Lambda\) be a subset of \(\mathbb{Z}_p\). 
We call \(f\) uniformly 
approximable on \(\Lambda\) with respect to \(\Phi\) if 
\(f\) is approximable at any \(u\in \Lambda\) with respect to \(\Phi\) and if 
\(h(u),l(u)\) are constants on \(\Lambda\) which we call \(h(\Lambda)\) and \(l(\Lambda)\) in that case. 
\begin{cor}
Let \(\Phi:\mathbb{N}\to\mathbb{N}\) be a strictly increasing function and \(f\in \mathcal{F}(\Phi)\). 
Let \(n_0,u\in\mathbb{N}\) with \(u<p^{1+\Phi(n_0)}\). Assume that \(f\) is uniformly approximable on the set 
\[\Lambda:=\left\{u'\in\mathbb{Z}_p\left| u'\equiv u\pmod {p^{1+\Phi(n_0)}}\right.\right\}\]
with respect to \(\Phi\). Moreover, assume that 
\begin{align*}
&n_0+1\geq l(\Lambda),\\
&f(u')\equiv 0 \pmod {p^{h(\Lambda)+n_0+1}}
\end{align*}
and that 
\[|(\delta_n f) (u')|_p=1\]
for any \(n\geq n_0\) and \(u'\in \Lambda\). 
Then there exists a unique \(\xi\in\mathbb{Z}_p\) such that 
\begin{align*}
&f(\xi)=0,\\
&\xi\equiv u\pmod {p^{1+\Phi(n_0)}},
\end{align*}
and such that, for any \(n\geq n_0\), 
\begin{align*}
\rho(\xi;n+1)\in \{0,1,\ldots,p-1\}.
\end{align*}
\label{cor}
\end{cor}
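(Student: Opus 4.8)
The plan is to reduce the statement to Theorem \ref{thm2} by setting $h := h(\Lambda)$ and choosing the constant sequence $S(n) := \{1, 2, \ldots, p-1\}$ for every $n \geq n_0$. Since $\Phi$ is strictly increasing we have $\Phi(n+1)-\Phi(n) \geq 1$, whence $S(n) \subset (0,p) \subseteq \bigl(0, p^{\Phi(n+1)-\Phi(n)}\bigr)$ and $\mathrm{Card}\, S(n) = p-1$, as required. The congruence (\ref{umod}) follows immediately by applying the hypothesis $f(u') \equiv 0 \pmod{p^{h(\Lambda)+n_0+1}}$ to $u' = u \in \Lambda$. With these choices, the conclusion (\ref{eqn:3-6}) of Theorem \ref{thm2} reads $\rho(\xi;n+1) \in \{0\}\cup S(n) = \{0,1,\ldots,p-1\}$, which is precisely the assertion of the corollary; so everything reduces to verifying (\ref{eqn:3-3}) for existence and (\ref{yyy}) for uniqueness.

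To verify (\ref{eqn:3-3}), I would fix $n \geq n_0$ and an integer $m < p^{1+\Phi(n)}$ with $m \equiv u \pmod{p^{1+\Phi(n_0)}}$, so that $m \in \Lambda$. For $i \in \{1,\ldots,p-1\}$ the same elementary computation as in the proof of Theorem \ref{thm2} (namely $\tau(m+ip^{1+\Phi(n)}) = n+1$ and $M(m+ip^{1+\Phi(n)}) = ip^{1+\Phi(n)}$, combined with (\ref{eqn:2-2}) and (\ref{eqn:3-2})) gives
\[ b\bigl(\Phi, f; m+ip^{1+\Phi(n)}\bigr) = p^{-(n+1)}\bigl(f(m+ip^{1+\Phi(n)}) - f(m)\bigr). \]
I would then apply the approximability relation (\ref{eqn:4-1}) at the base point $m$ with index $n+1$ (valid since $n+1 \geq n_0+1 \geq l(\Lambda)$) and $u'=i$, noting that $1+\Phi(n) = 1+\Phi((n+1)-1)$; this yields $f(m+ip^{1+\Phi(n)}) - f(m) \equiv p^{h+n+1} i\,(\delta_{n+1}f)(m) \pmod{p^{h+n+2}}$, and hence $b(\Phi,f;m+ip^{1+\Phi(n)}) \equiv p^{h} i\,(\delta_{n+1}f)(m) \pmod{p^{h+1}}$. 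Since $(\delta_{n+1}f)(m)$ is a unit, its reduction $c := (\delta_{n+1}f)(m) \bmod p$ lies in $\{1,\ldots,p-1\}$, and the map $i \mapsto ic \bmod p$ permutes $\{1,\ldots,p-1\}$; therefore the left-hand set in (\ref{eqn:3-3}) equals $\{p^h j \bmod p^{h+1} \mid j=1,\ldots,p-1\}$. Theorem \ref{thm2} then yields the required $\xi$.

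For uniqueness it remains to establish (\ref{yyy}). I would take an integer $m \geq p^{1+\Phi(n_0)}$ with $m \equiv u \pmod{p^{1+\Phi(n_0)}}$ and put $t := \tau(m) \geq n_0+1$. Writing $M(m) = w\,p^{1+\Phi(t-1)}$ with $w$ a positive integer and $m - M(m)$ the corresponding truncation, I note that $t-1 \geq n_0$ forces $M(m) \equiv 0 \pmod{p^{1+\Phi(n_0)}}$, so $m - M(m)$ again belongs to $\Lambda$. Applying (\ref{eqn:4-1}) at the base point $m-M(m)$ with index $t \geq n_0+1 \geq l(\Lambda)$ and $u' = w$ gives $B(\Phi,f;m) = f(m) - f(m-M(m)) \equiv p^{h+t} w\,(\delta_t f)(m-M(m)) \pmod{p^{h+t+1}}$, whence $v_p(B(\Phi,f;m)) \geq h+t$ and therefore $b(\Phi,f;m) = p^{-t}B(\Phi,f;m) \equiv 0 \pmod{p^h}$, which is exactly (\ref{yyy}). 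The uniqueness clause of Theorem \ref{thm2} then completes the proof.

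The main obstacle is purely bookkeeping rather than conceptual: one must keep the index shift straight, so that the Hensel step at scale $1+\Phi(n)$ invokes the approximability datum $(\delta_{n+1}f)$ rather than $(\delta_n f)$, and one must check that both base points $m$ and $m-M(m)$ genuinely lie in $\Lambda$ so that the uniform constants $h(\Lambda), l(\Lambda)$ and the unit hypothesis $|(\delta_n f)(u')|_p = 1$ apply at the correct indices $n+1$ and $t$ (both of which are $\geq n_0+1 \geq l(\Lambda)$). Once this index-and-membership check is carried out, both (\ref{eqn:3-3}) and (\ref{yyy}) follow at once.
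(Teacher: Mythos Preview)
Your proposal is correct and follows essentially the same route as the paper's proof: both reduce to Theorem~\ref{thm2} with $h=h(\Lambda)$ and $S(n)=\{1,\ldots,p-1\}$, verify (\ref{eqn:3-3}) via the approximability relation (\ref{eqn:4-1}) applied at $m\in\Lambda$ with index $n+1$, and verify (\ref{yyy}) by applying (\ref{eqn:4-1}) at $m-M(m)\in\Lambda$ with index $\tau(m)$. Your write-up is in fact slightly more careful than the paper's in a few places---you explicitly check $S(n)\subset(0,p^{\Phi(n+1)-\Phi(n)})$, explain why $m-M(m)\in\Lambda$, and spell out the permutation $i\mapsto ic\bmod p$ that underlies (\ref{eqn:3-3}).
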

\begin{proof}
We apply Theorem \ref{thm2} with \(h=h(\Lambda)\) and \(S(n):=\{1,\ldots,p-1\}\) for all 
\(n\geq n_0\). 
Since \(f(u)\equiv 0 \pmod {p^{h(\Lambda)+n_0+1}}\) by the assumption of Corollary \ref{cor}, 
it suffices to check (\ref{eqn:3-3}) and (\ref{yyy}). Assume that \(n,m,i\in\mathbb{N}\) satisfy \(n\geq n_0\), 
\(m<p^{1+\Phi(n)}\), \(m\equiv u \pmod {p^{1+\Phi(n_0)}}\), and \(1\leq i\leq p-1\). Since 
\(\tau(m+ip^{1+\Phi(n)})=n+1\), we see by (\ref{eqn:4-1}) and by $n+1\geq n_0+1\geq l(\Lambda)$ that 
\begin{align*}
b(m+i p^{1+\Phi(n)})&=p^{-n-1}(f(m+i p^{1+\Phi(n)})-f(m))\\
&\equiv p^{h(\Lambda)} i\cdot  (\delta_{n+1} f) (m) \pmod {p^{h(\Lambda)+1}},
\end{align*}
which implies (\ref{eqn:3-3}) by \(|(\delta_n f) (m)|_p=1\). \par
Now, let \(m\) be an integer with \(m\geq p^{1+\Phi(n_0)}\), \(m\equiv u \pmod {p^{1+\Phi(n_0)}}\). 
Put \(u':=m-M(m)\) and \(n:=\tau(m)-1\geq n_0\). Recall that
$M(m)=\sum_{i=\Phi(n)+1}^{\Phi(n+1)} m_i p^i$ and therefore
\(u'':=p^{-1-\Phi(n)}M(m)\) is an integer. 
Thus, we see by \(u'\in\Lambda\), (\ref{eqn:4-1}) and $n+1\geq n_0+1\geq l(\Lambda)$ that 
\[f(u'+p^{1+\Phi(n)} u'') \equiv f(u')+p^{h(\Lambda)+n+1} u''\cdot (\delta_{n+1} f) (u') \pmod {p^{h(\Lambda)+n+2}}.\]
Finally, since \(m=u'+p^{1+\Phi(n)} u''\) we obtain 
\begin{align*}
b(m)&=p^{-n-1}(f(u'+p^{1+\Phi(n)} u'')-f(u'))\\
&\equiv p^{h(\Lambda)} u''\cdot  (\delta_{n+1} f) (u') \equiv 0 \pmod {p^{h(\Lambda)}}, 
\end{align*}
which implies (\ref{yyy}). 
\end{proof}
\begin{exa}
\begin{rm}
Let \(a\in\mathbb{Z}_p\) be fixed. If \(p\ne 2\), then assume that \(a\equiv 1 \pmod p\). If \(p=2\), then suppose that 
\(a\equiv 1\pmod 8\). For any \(x\in \mathbb{Z}_p\), put 
\[f(x):=-a+\left(\sum_{j=0}^{\infty} x_{2j} p^j\right)^2.\]
Then we have \(f\in\mathcal{F}(\Phi)\), where \(\Phi(n)=2n+1\) for any nonnegative integer \(n\). 
Put 
\begin{align*}
n_0=h=\begin{cases}
0 & \mbox{if }p\ne 2, \\
1 & \mbox{if }p=2.
\end{cases}
\end{align*}
and 
\begin{align*}
l=\begin{cases}
1 & \mbox{if }p\ne 2, \\
2 & \mbox{if }p=2.
\end{cases}
\end{align*}
Note that in the case of $p=2$ the value of $h$ is different from 0. \par 
First, there exists \(u\in\mathbb{Z}_p\) with 
\[f(u)\equiv 0 \pmod {p^{h+n_0+1}}.\]
Let \(\Lambda:=\{u'\in\mathbb{Z}_p\mid u'\equiv u\pmod {p^{1+\Phi(n_0)}}\}\). We show that \(f\) 
is uniformly approximable on \(\Lambda\) with respect to \(\Phi\) with 
\(h(\Lambda)=h\), \(l(\Lambda)=l\) and 
\begin{align}
(\delta_n f)(x)=\begin{cases}
2 & \mbox{if }p\ne 2, \\
1 & \mbox{if }p=2
\end{cases}
\label{eqn:4-4}
\end{align}
for any \(x\in \Lambda\). Thus, Corollary \ref{cor} is applicable to \(f\). \par
Let \(x\in \Lambda\). Let \(n\) be an integer with \(n\geq l\) and \(y\in\mathbb{Z}_p\). Since 
\[
x+p^{1+\Phi(n)}y=\sum_{j=0}^{2n+1} x_j p^j
+
\sum_{j=2n+2}^{\infty} (x_j+y_{j-2n-2}) p^j,
\]
we get after a direct calculation
\begin{align}
&f\left(x+p^{1+\Phi(n)}y\right)-f(x)\nonumber\\
&=\left(
\sum_{j=0}^{n} x_{2j} p^j
+
\sum_{j=n+1}^{\infty} (x_{2j}+y_{2j-2n-2}) p^j
\right)^2
-
\left(
\sum_{j=0}^{\infty} x_{2j} p^j
\right)^2\nonumber\\
& \equiv
2\sum_{j=0}^{n} x_{2j} p^j\cdot 
\sum_{j=n+1}^{\infty} y_{2j-2n-2} p^j
\pmod {p^{2n+2}}.
\label{eqn:4-5}
\end{align}
Note that \(x_0\equiv 1\pmod p\) by \(x\in \Lambda\). If \(p\) is odd, then (\ref{eqn:4-5}) implies that 
\begin{align}
f\left(x+p^{1+\Phi(n)}y\right)-f(x)
\equiv 2x_0 y_0 p^{n+1}\equiv 2y p^{n+1}\pmod {p^{n+2}}.
\label{eqn:4-6}
\end{align}
In the case of \(p=2\), we get by \(n\geq 1\) that 
\begin{align}
f\left(x+2^{1+\Phi(n)}y\right)-f(x)
\equiv 2x_0 y_0 \cdot 2^{n+1}\equiv y 2^{n+2}\pmod {2^{n+3}}.
\label{eqn:4-7}
\end{align}
Thus, (\ref{eqn:4-6}) and (\ref{eqn:4-7}) imply the approximability and (\ref{eqn:4-4}).
\end{rm}
\end{exa}

\section*{Acknowledgements}

The first author is supported by JSPS KAKENHI Grant Number 15K17505. The second author acknowledges the support of the bilateral project ANR-FWF (France-Austria) called MUDERA (Multiplicativity, Determinism, and Randomness), ANR-14-CE34-0009.

\end{document}